\theoremstyle{plain}
\newtheorem{theorem}{Theorem}
\newtheorem{corollary}[theorem]{Corollary}
\newtheorem{claim}[theorem]{Claim}
\newtheorem{lemma}[theorem]{Lemma}
\newtheorem{conjecture}[theorem]{Conjecture}
\theoremstyle{definition}
\author{Istv\'{a}n Tomon\thanks{\'{E}cole Polytechnique F\'{e}d\'{e}rale de Lausanne, Research partially supported by Swiss National Science Foundation grants no. 200020-162884 and 200021-175977.			
		\emph{e-mail}: \textbf{istvan.tomon@epfl.ch}}
}
\title{Forbidden induced subposets in the grid}
\begin{document}
\sloppy
\maketitle

\begin{abstract}
In this short paper, we prove the following generalization of a result of Methuku and P\'{a}lv\"{o}lgyi. Let $P$ be a poset, then there exists a constant $C_{P}$ with the following property. Let $k$ and $n$ be arbitrary positive integers such that $n$ is at least the dimension of $P$, and let $w$ be the size of the largest antichain of the grid $[k]^{n}$ endowed with the usual pointwise ordering. If $S$ is a subset of $[k]^{n}$ not containing an induced copy of $P$, then $|S|\leq C_{P}w$.
\end{abstract}

\section{Introduction}

The \emph{Boolean lattice} $2^{[n]}$ is the power set of $[n]=\{1,...,n\}$ ordered by inclusion. If $P$ and $Q$ are posets, a subset $P'$ of $Q$ is a \emph{copy} of $P$ if the subposet of $Q$ induced on $P'$ is isomorphic to $P$.  The following theorem, originally conjectured by Katona, and Lu and Milans \cite{LM} was proved by Methuku and P\'{a}lv\"{o}lgyi \cite{MP}.

\begin{theorem}(Methuku, P\'{a}lv\"{o}lgyi \cite{MP})\label{mainthm0}
	Let $P$ be a poset.  Then there exists a constant $C=C(P)$ with the following property. If  $S\subset 2^{[n]}$ such that $S$ does not contain a copy of $P$, then $|S|\leq C\binom{n}{\lfloor n/2\rfloor}$.
\end{theorem}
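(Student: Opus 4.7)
The plan is to bound the Lubell mass $\lambda(S) := \sum_{A \in S} \binom{n}{|A|}^{-1}$ by a constant $C_P$ that depends only on $P$. Since $\binom{n}{|A|} \le \binom{n}{\lfloor n/2 \rfloor}$ for every $A \subseteq [n]$, this would immediately give $|S| \le C_P \binom{n}{\lfloor n/2 \rfloor}$. Moreover, a standard double count shows that $\lambda(S)$ equals the expected number of elements of $S$ lying on a uniformly random maximal chain $\mathcal{C}$ of $2^{[n]}$, so the task reduces to the following contrapositive: if a random maximal chain meets $S$ in at least $C_P$ elements in expectation, then $S$ already contains an induced copy of $P$.

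I would prove this in two stages. First, by averaging, pick a maximal chain $\mathcal{C} = (\emptyset = B_0 \subsetneq B_1 \subsetneq \dots \subsetneq B_n = [n])$ whose intersection with $S$ is very long, giving a chain $A_0 \subsetneq A_1 \subsetneq \dots \subsetneq A_N$ in $S$. A long chain alone is not enough when $P$ is not itself a chain, so the second stage records, for each $A_i$, the pattern of small ``flips'' $A_i \triangle T$ (with $|T|$ bounded by a constant depending only on $P$) that also belong to $S$. A Ramsey-type pigeonhole on the resulting sequence of flip patterns extracts a very long sub-chain along which a single flip pattern is consistently realized. Since every finite poset $P$ embeds as an induced subposet of $\mathbb{Z}^d$ for $d = \dim(P)$, a sufficiently long sub-chain with rich common flips lets me encode any fixed $P$ by selecting $d$ positions along the sub-chain and using the flips as coordinate moves, yielding the forbidden induced copy in $S$.

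The main obstacle is the second stage: one needs to quantify how large $\lambda(S)$ must be to force not merely a long chain in $S$ but also a consistent family of off-chain flipped elements of $S$. The flip sizes and Ramsey parameters must be chosen so that (i) the flips produce only the intended comparabilities, avoiding accidental ones that would destroy the \emph{induced} condition, and (ii) all required quantities depend solely on $|P|$ and $\dim(P)$, not on $n$. Balancing these two constraints is the technical core of the argument and is what determines the constant $C_P$.
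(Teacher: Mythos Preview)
Your plan aims at a stronger conclusion than the paper actually proves: bounding the Lubell mass $\lambda(S)$ is precisely M\'eroueh's theorem (stated here as Theorem~\ref{Lubell}), which the paper cites but does \emph{not} reprove. The paper instead obtains Theorem~\ref{mainthm0} as the $k=2$ case of the grid result Theorem~\ref{mainthm}. That proof writes $n=n_1+\dots+n_d$ with $d=\dim P$, partitions each factor $2^{[n_i]}$ into chains of roughly equal length $\Theta(\sqrt{n/d})$ using a chain-decomposition theorem for unimodal normalized matching posets (Corollary~\ref{chain}), takes cartesian products of these chains to cover $2^{[n]}$ by small $d$-dimensional grids, and then bounds $|S|$ on each such grid via the Klazar--Marcus multidimensional Marcus--Tardos theorem (Corollary~\ref{cor:pattern}). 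No Lubell-mass, chain-averaging, or Ramsey argument appears.

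Your outline, by contrast, has a genuine gap at the second stage. First, for $|T|\le t$ the set of admissible flip sets $T\subset[n]$ has size $\sum_{j\le t}\binom{n}{j}=\Theta(n^{t})$, so the number of possible ``flip patterns'' is $2^{\Theta(n^{t})}$; a pigeonhole or Ramsey step on that many colours cannot extract even two indices with a common pattern from a chain of length at most $n+1$, let alone a sub-chain of length depending only on $P$. Second, and more fundamentally, the averaging step hands you a single maximal chain $\mathcal{C}$ meeting $S$ in many points but says nothing about whether \emph{any} flip $A_i\triangle T$ lies in $S$; nothing in the hypothesis $\lambda(S)\ge C_P$ forces off-chain elements of $S$ to sit in prescribed local positions relative to $\mathcal{C}$. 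If the extracted ``consistent flip pattern'' is empty, step~4 cannot build any non-chain $P$ and the argument stalls. Both the paper's proof and M\'eroueh's avoid this obstruction by bringing in the Marcus--Tardos machinery, which is exactly what supplies the missing global structural input that a purely chain-local flip argument lacks.
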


The aim of this paper is to give a slightly different proof of this theorem, which extends from the Boolean lattice to arbitrary \emph{grids} as well. A \emph{grid} of size $k_{1}\times...\times k_{n}$ is the cartesian product  $G=[k_{1}]\times...\times[k_{n}]$ endowed with the pointwise ordering, that is, if $(x_{1},...x_{n}),(y_{1},...,y_{n})\in G$, then $(x_{1},...,x_{n})\leq_{G} (y_{1},...,y_{n})$ if $x_{i}\leq y_{i}$ for $i\in [n]$. If $k_{1}=...=k_{n}=k$, we shall write $[k]^{n}$ instead of $[k_{1}]\times...\times [k_{n}]$.

Before we state our main theorem, let us introduce a few definitions. The \emph{dimension} (Dushnik-Miller dimension) of a poset $P$, denoted by $\dim P$, is the smallest positive integer $d$ such that $P$ is the intersection of $d$ linear orders. Formally, $\dim P$ is the smallest positive integer $d$ for which there exist $d$ bijections $L_{1},...,L_{d}:P\rightarrow [|P|]$ such that for every $p,q\in P$, $p\leq_{P}q$ iff $L_{i}(p)\leq L_{i}(q)$ holds for every $i\in [d]$. 

Also, the \emph{width} of a poset $P$ is the size of the largest antichain in $P$ and is denoted by $w(P)$. Let us mention that by a result of Hiraguchi \cite{H}, we have $\dim P\leq\min\{|P|/2,w(P)\}$. 

The following theorem is the main result of this manuscript.

\begin{theorem}\label{mainthm}
	Let $P$ be a poset.  Then there exists a constant $C_{P}$ with the following property. Let $k$ and $n$ be positive integers satisfying $n\geq \dim P$, and let $w$ be the width of $[k]^{n}$. If $S\subset [k]^{n}$ such that $S$ does not contain a copy of $P$, then $|S|\leq C_{P}w$.
\end{theorem}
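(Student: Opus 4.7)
The plan is to adapt the approach of Methuku and P\'{a}lv\"{o}lgyi via a template-and-double-counting argument. Set $d=\dim P$ and $N=|P|$. The definition of Dushnik--Miller dimension furnishes an induced embedding $\phi\colon P\hookrightarrow [N]^d$, so every sub-poset of $[k]^n$ order-isomorphic to $[N]^d$ automatically contains an induced copy of $P$; in particular no such sub-poset can lie entirely inside $S$.

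I would build a family $\mathcal{T}$ of sub-posets of $[k]^n$, each isomorphic to $[N]^d$, satisfying (i) each element $s\in [k]^n$ lies in at least $\mu$ members of $\mathcal{T}$, and (ii) $|\mathcal{T}|/\mu=O_P(w)$. Using $n\ge d$, partition $[n]=I_1\sqcup\cdots\sqcup I_d$ to obtain the product decomposition $[k]^n\cong [k]^{I_1}\times\cdots\times [k]^{I_d}$. By de Bruijn--Tengbergen--Kruyswijk each factor $[k]^{I_j}$ admits a symmetric chain decomposition; the templates in $\mathcal{T}$ are then products of one contiguous sub-chain of length $N$ from each factor, with shorter sub-chains used near the boundary. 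Granted (i) and (ii), the local bound $|S\cap T|\le N^d-1$ together with the standard double-count yield
\[
\mu\cdot|S| \;\le\; \sum_{T\in\mathcal{T}}|S\cap T| \;\le\; (N^d-1)\,|\mathcal{T}| \;=\; O_P(w)\cdot\mu,
\]
which gives $|S|=O_P(w)$.

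The principal obstacle is calibrating (ii) correctly. In the Boolean lattice the permutation-symmetry of $2^{[n]}$ makes the calculation essentially automatic, with $|\mathcal{T}|/\mu$ equal to a constant multiple of $\binom{n}{\lfloor n/2\rfloor}=w$. In the grid $[k]^n$ the symmetric chain decomposition of $[k]^{I_j}$ has a more delicate rank profile, and the width identity expressing $w([k]^n)$ as the central coefficient of $(1+x+\cdots+x^{k-1})^n$ must be used to convert the count of templates into the width of $[k]^n$. A secondary issue is that elements sitting near the top or bottom of $[k]^n$ lie in fewer templates; however, such elements form a thin boundary layer of size $O_P(w)$ and can be absorbed into the final constant $C_P$. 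Once the counting is carried out, the bound takes the form $C_P=O_d(N^d)$, depending only on $P$ as required.
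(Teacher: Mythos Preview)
Your double-counting scheme with fixed-size templates cannot give the claimed bound, and this is a structural rather than a technical issue. Each template $T$ has exactly $N^{d}$ elements, so summing the coverage over all of $[k]^{n}$ gives $|\mathcal{T}|\cdot N^{d}=\sum_{s}|\{T:s\in T\}|\ge \mu\,k^{n}$; hence $|\mathcal{T}|/\mu\ge k^{n}/N^{d}$ no matter how cleverly the templates are placed. Plugging this into your inequality yields only $|S|\le (N^{d}-1)\,|\mathcal{T}|/\mu\ge (1-N^{-d})k^{n}$, i.e.\ essentially the trivial bound. Since $k^{n}/w([k]^{n})=\Theta(k\sqrt{n})$, your calibration (ii) fails by a factor of $k\sqrt{n}$, and the same arithmetic shows it already fails in the Boolean case $k=2$: with constant-size templates you cannot make $|\mathcal{T}|/\mu$ comparable to $\binom{n}{\lfloor n/2\rfloor}$. (Incidentally, this also explains why your predicted constant $C_P=O_d(N^{d})$ is too good to be true; the paper notes that $C_P$ must be exponential in $|P|$ for some posets.)

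What is missing is a nontrivial local bound on large pieces, which is exactly the role of the Klazar--Marcus extension of Marcus--Tardos in both the Methuku--P\'alv\"olgyi proof and the paper's proof. The paper writes $n=n_{1}+\cdots+n_{d}$ with $n_{j}\approx n/d$, but instead of the symmetric chain decomposition it uses Tomon's theorem to partition each $[k]^{n_{j}}$ into chains whose lengths all lie in a bounded ratio $[l_{j},4l_{j}]$ with $l_{j}=\Theta(k\sqrt{n/d})$. The resulting product boxes $G=C_{1}\times\cdots\times C_{d}$ are isomorphic to sub-grids of $[4m]^{d}$ with $m=\Theta(k\sqrt{n/d})$, and Corollary~\ref{cor:pattern} (Klazar--Marcus) gives $|S\cap G|\le c_{P}(4m)^{d-1}=O(1)^{d}c_{P}|G|/m$. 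Summing over the partition and using $w=\Theta(k^{n-1}/\sqrt{n})$ then yields $|S|=O(1)^{d}c_{P}k^{n}/m=O(1)^{d}c_{P}\sqrt{d}\,w$. Both ingredients---the pattern-avoidance bound and the chain decomposition with \emph{controlled} (not merely symmetric) chain lengths---are essential; a template family built from length-$N$ sub-chains and the trivial bound $|S\cap T|\le N^{d}-1$ cannot substitute for either.
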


In case $k=2$, we have $w=\binom{n}{\lfloor n/2\rfloor}$ by the well known theorem of Sperner \cite{S}, so our main theorem is indeed a strengthening of Theorem \ref{mainthm0}.

Forbidden (weak) subposet and induced subposet problems are extensively studied in the Boolean lattice $2^{[n]}$, for recent developments see \cite{BJ,BN,GMT,LM}, for example. However, there are not many such results when $2^{[n]}$ is replaced with some grid $[k]^{n}$. A well known result of Erd\H{o}s \cite{E} is that if $S\subset 2^{[n]}$ does not contain a chain of size $l$, then $|S|$ is at most the sum of the $l-1$ largest binomial coefficients of order $n$. In fact, $|S|\leq (l-1)\binom{n}{\lfloor n/2\rfloor}$. This result easily generalizes to $[k]^{n}$ as well, that is, if $S\subset [k]^{n}$ does no contain a chain of size $l$, then $|S|\leq (l-1)w([k]^{n})$. See Section \ref{sect:lubell} for a short explanation.

The proof of Theorem \ref{mainthm} is quite short, although it builds on two other results, both of which requires its own introduction. The first of these results, presented in Section \ref{sect:patterns}, is a high dimensional generalization of the celebrated theorem of Marcus and Tardos \cite{MT}  on matrix patterns. Let us note that the connection between high dimensional permutation matrices and partial orders has been already explored in \cite{MP} in the proof of Theorem \ref{mainthm0}. The new ingredient in the proof of Theorem \ref{mainthm} is the application of special chain decompositions of the grid. This topic shall be introduced in Section \ref{sect:chains}. The proof of the main theorem is presented in Section \ref{sect:thm}. Finally, in Section \ref{sect:lubell}, we discuss a possible extension of Theorem \ref{mainthm} concerning the so called Lubell function.

\section{Forbidden matrix patterns}\label{sect:patterns}

In this section, we state the theorem of Klazar and Marcus \cite{KM} on forbidden matrix patterns. For this, let us introduce a few definitions.  

A $d$-dimensional $0-1$ matrix is called \emph{$d$-pattern}. The \emph{weight} of a $d$-pattern $M$ is the number of $1$'s in $M$ and is denoted by $\omega(M)$. A $d$-pattern $M$ of size $m_{1}\times...\times m_{d}$ \emph{contains} a $d$-pattern $A$ of size $a_{1}\times...\times a_{d}$, if  there exist indices $i_{x,y}$ for $x\in [d]$, $y\in [a_{x}]$ such that $1\leq i_{x,1}<i_{x,2}<...<i_{x,a_{x}}\leq m_{a}$, and $M(i_{1,y_{1}},...,i_{d,y_{d}})=1$ if $A(y_{1},...,y_{d})=1$. In other words, $M$ contains $A$ if $M$ has an $a_{1}\times...\times a_{d}$ sized submatrix $M'$, where each $1$ of $A$ corresponds to a $1$ of $M'$. Say that $M$ \emph{avoids} $A$, if $M$ does not contain $A$.

 A $k\times...\times k$ sized $d$-pattern $A$ is a \emph{permutation pattern} if each axis-parallel hyperplane of $A$ contains at most one $1$ entry.

 The following is the main theorem of this section, proved by Marcus and Tardos \cite{MT} in the case $d=2$, and extended by Klazar and Marcus \cite{KM} for arbitrary $d$.
 
\begin{theorem}\label{pattern}(Klazar, Marcus \cite{KM})
	Let $A$ be a $d$-dimensional permutation pattern. There exists a constant $c_{A}$ such that for every positive integer $m$, every  $m\times...\times m$ sized $d$-pattern $M$ that avoids $A$ satisfies $$\omega(M)\leq c_{A}m^{d-1}.$$
\end{theorem}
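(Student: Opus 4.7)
The plan is to prove Theorem \ref{pattern} by induction on the side length $m$, generalizing the two-dimensional partitioning argument of Marcus and Tardos. Let $A$ have size $k \times \cdots \times k$, let $f(m)$ denote the maximum weight of an $m \times \cdots \times m$ $d$-pattern avoiding $A$, and set $s = k^d$. My goal is to establish a recurrence of the form
\[
	f(m) \leq C_1 \, m^{d-1} + (k-1)^d \, f(m/s),
\]
which unrolls to $f(m) = O(m^{d-1})$ since $(k-1)^d / s^{d-1} = (k-1)^d / k^{d(d-1)} < 1$ whenever $k, d \geq 2$.

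To set up the recurrence, partition the ambient cube $[m]^d$ into sub-cubes of side $s$, giving a coarser $(m/s)^d$ block grid. For each direction $i \in [d]$, call a sub-cube $B$ \emph{$i$-rich} if its $1$-entries take at least $k$ distinct values in coordinate $i$. A sub-cube rich in no direction has its $1$s supported in a $(k-1)^d$ sub-grid, so contributes at most $(k-1)^d$ to $\omega(M)$. A sub-cube rich in some direction can contribute up to $s^d$ entries, but I will show that such rich sub-cubes are rare.

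The main ingredient is a \emph{slab bound}: in any $(d-1)$-dimensional slab of sub-cubes obtained by fixing every block-coordinate except the $i$-th, the number of $i$-rich sub-cubes is bounded by a constant $C_{k,d}$. Each $i$-rich sub-cube carries an ``$i$-signature'', a $k$-subset of local $i$-coordinates that contain a $1$; there are only $\binom{s}{k}$ possible signatures, so by pigeonhole a sufficiently long slab contains $k$ sub-cubes sharing one signature, and from these one extracts a copy of $A$ using its permutation structure. I expect this extraction step — promoting shared local $i$-coordinates to a full copy of the pattern by iterating pigeonhole across the remaining $d-1$ coordinate directions — to be the main obstacle, essentially a sub-induction on dimension within the argument.

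Granting the slab bound, the number of sub-cubes rich in at least one direction is at most $d \cdot C_{k,d} (m/s)^{d-1} = O(m^{d-1})$, contributing $O(m^{d-1})$ ones in total. For the remaining non-empty sub-cubes (those poor in every direction), the \emph{meta-pattern} on the block grid that marks non-empty sub-cubes itself avoids $A$: a copy of $A$ in the meta-pattern, together with one $1$-entry chosen from each selected sub-cube, yields a copy of $A$ in $M$, since the block partition respects the coordinate order in each direction. Hence the number of non-empty sub-cubes is at most $f(m/s)$, each poor one contributing at most $(k-1)^d$ ones. Summing the rich and the poor contributions yields the displayed recurrence and completes the induction.
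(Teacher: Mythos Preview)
The paper does not give its own proof of Theorem~\ref{pattern}: it is quoted as a black box from Klazar and Marcus~\cite{KM}, so there is nothing in the paper to compare your argument against. What you have written is, in outline, precisely the Klazar--Marcus generalisation of the Marcus--Tardos argument, so in that sense you are reproducing the intended proof.

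That said, your ``slab bound'' is misformulated in a way that matters. First, ``fixing every block-coordinate except the $i$-th'' produces a one-dimensional line of sub-cubes, not a $(d-1)$-dimensional slab; presumably you meant to fix only the $i$-th block coordinate. Second, and more importantly, on neither reading is the number of $i$-rich sub-cubes bounded by a constant $C_{k,d}$ once $d\geq 3$. Along an $i$-line your pigeonhole does yield $k$ sub-cubes sharing an $i$-signature, but they all sit in a single block in every direction $j\neq i$, so the $1$'s you extract need not have distinct $x_j$-coordinates and no copy of $A$ is forced. In a genuine $(d-1)$-dimensional slab the number of sub-cubes is $(m/s)^{d-1}$, and there is no constant bound on how many can be $i$-rich.

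The correct statement --- and this is where the ``sub-induction on dimension'' you mention actually enters --- is the following. Fix the $i$-th block coordinate and a $k$-subset $T\subset[s]$. The $i$-rich sub-cubes in this slab with signature $T$ form a $(d-1)$-dimensional $0$--$1$ array on the block grid $[m/s]^{d-1}$, and this array avoids the $(d-1)$-dimensional permutation pattern $A'$ obtained from $A$ by deleting the $i$-th coordinate: a copy of $A'$ among such sub-cubes, together with the one entry at local $x_i$-coordinate $t_{a_i^{(j)}}\in T$ inside the $j$-th selected sub-cube, assembles into a copy of $A$ in $M$ (the sub-cubes occupy distinct block-ranges in every direction $j\neq i$, so those coordinates come out correctly ordered for free). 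By induction on $d$ this gives at most $c_{A'}(m/s)^{d-2}$ such sub-cubes per slab per signature, hence $O_{k,d}((m/s)^{d-1})$ rich sub-cubes in total, which feeds into your recurrence exactly as you wrote. With this correction the rest of your argument goes through.
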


We shall use the following simple corollary of this theorem.

\begin{corollary}\label{cor:pattern}
	Let $P$ be a $d$-dimensional poset. There exists a constant $c_{P}$ such that for every positive integer $m$, if a set $S\subset [m]^{d}$ does not contain a copy of $P$, then $$|S|\leq c_{P}m^{d-1}.$$
\end{corollary}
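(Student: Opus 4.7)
The plan is to reduce Corollary \ref{cor:pattern} directly to Theorem \ref{pattern} by embedding the poset $P$ as a permutation pattern. Since $\dim P = d$, there exist bijections $L_1,\dots,L_d : P \to [|P|]$ such that $p \leq_P q$ iff $L_i(p) \leq L_i(q)$ for every $i \in [d]$. First, I would use these to construct a $|P| \times \cdots \times |P|$ sized $d$-pattern $A$ whose $1$-entries are exactly the points $(L_1(p),\dots,L_d(p))$ for $p\in P$. Because each $L_i$ is a bijection, every axis-parallel hyperplane of $A$ contains exactly one $1$, so $A$ is a permutation pattern in the sense of Section \ref{sect:patterns}.

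Next, given $S \subset [m]^d$ avoiding a (necessarily induced) copy of $P$, I would define the $m\times\cdots\times m$ sized $d$-pattern $M_S$ by $M_S(x_1,\dots,x_d) = 1$ iff $(x_1,\dots,x_d)\in S$. The main claim is that $M_S$ avoids $A$. Suppose toward contradiction that $M_S$ contains $A$: then there exist strictly increasing sequences of indices $i_{x,1}<\cdots<i_{x,|P|}$ for $x\in[d]$ such that for every $p\in P$ the point $\phi(p):=(i_{1,L_1(p)},\dots,i_{d,L_d(p)})$ belongs to $S$. The strict monotonicity of each $i_{x,\cdot}$ together with the dimension equivalence gives, for all $p,q\in P$, the chain of equivalences $p \leq_P q \Leftrightarrow L_i(p)\leq L_i(q)\ \forall i \Leftrightarrow \phi(p) \leq_{[m]^d} \phi(q)$. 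Hence $\phi(P)$ is an induced copy of $P$ in $S$, a contradiction.

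Therefore $M_S$ avoids the permutation pattern $A$, and Theorem \ref{pattern} yields $|S| = \omega(M_S) \leq c_A m^{d-1}$, so we may take $c_P := c_A$. There is no substantial obstacle here; the only thing to be a bit careful about is the ``induced'' clause, which is exactly what the Dushnik--Miller realization via $d$ bijections guarantees, since the condition runs through all $d$ coordinates in both directions.
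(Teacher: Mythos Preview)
Your proposal is correct and follows essentially the same approach as the paper: encode $P$ as a $d$-dimensional permutation pattern $A$ via its Dushnik--Miller realization, encode $S$ as a $0$--$1$ matrix $M$, observe that $M$ avoiding $A$ is forced by $S$ avoiding an induced copy of $P$, and apply Theorem~\ref{pattern} with $c_P=c_A$. In fact you supply more detail than the paper, which simply asserts that ``as $S$ does not contain a copy of $P$, $M$ avoids $A$'' without spelling out the biconditional you verify.
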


\begin{proof}
Let $a=|P|$ and let $L_{1},...,L_{d}:P\rightarrow [a]$ be $d$ linear orders, whose intersection is $P$. Define the $a\times...\times a$ sized $d$-pattern $A$ such that 
$$A(i_{1},...,i_{d})= \begin{cases} 1 &\mbox{if } (i_{1},...,i_{d})=(L_{1}(p),...,L_{d}(p))\mbox{ for some }p\in P, \\
0 &\mbox{otherwise.} \end{cases}$$

Clearly, $A$ is a permutation pattern. We show that $c_{P}=c_{A}$ suffices, where $c_{A}$ is the constant defined in Theorem \ref{pattern}. Let $M$ be the $m\times...\times m$ sized $d$-pattern, where $M(j_{1},...,j_{d})=1$ iff $(j_{1},...,j_{d})\in S$. It is easy to see that $\omega(M)=|S|$, and as $S$ does not contain a copy of $P$, $M$ avoids $A$. Hence, $|S|\leq c_{P}m^{d-1}$.
\end{proof}

The optimal order of the constant $c_{A}$ in Theorem \ref{pattern} is also studied. The best general bounds are due to Geneson and Tian \cite{GT}, who proved that $c_{A}$ can be chosen to be at most $2^{O_{d}(k)}$, where $k=\omega(A)$. Also, for each $k,d$ they showed the existence of a $k\times...\times k$ sized $d$-dimensional  permutation pattern $B$ such that $c_{B}$ needs to be at least $2^{\Omega_{d}(k^{1/d})}$, extending a result of Fox \cite{F}. A similar construction shows that $c_{P}$ is also at least $2^{\Omega(|P|^{1/2})}$ for certain $2$-dimensional posets $P$. This already shows that if the constant $C_{P}$ in Theorem \ref{mainthm} truly exists, then it has to be exponential in $|P|$ for certain posets $P$. On the other hand, it is not clear whether there exist posets $P$ such that the order of the optimal constant $C(P)$ in Theorem \ref{mainthm0} is also exponential in $|P|$.

\section{Decomposition into long chains}\label{sect:chains}

The following conjecture was proposed by F\"{u}redi \cite{F}.

\begin{conjecture}\label{furediconj}
	For every positive integer $n$, the Boolean lattice $2^{[n]}$ can be partitioned into $\binom{n}{\lfloor n/2\rfloor}$ chains such that the size of each chain is $l$ or $l+1$, where $l=\lfloor2^{n}/\binom{n}{\lfloor n/2\rfloor}\rfloor$.
\end{conjecture}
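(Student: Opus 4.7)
The plan is to start from the classical symmetric chain decomposition (SCD) of $2^{[n]}$ as a base object, and then modify it by local exchanges until every chain has length $l$ or $l+1$. The SCD produces exactly $\binom{n}{\lfloor n/2 \rfloor}$ saturated chains, matching the target number of chains, but their lengths are distributed as $n+1, n-1, n-3, \ldots$, with multiplicities $\binom{n}{\lfloor n/2 \rfloor - i} - \binom{n}{\lfloor n/2 \rfloor - i - 1}$. The average length is exactly $2^n / \binom{n}{\lfloor n/2 \rfloor} \approx \sqrt{\pi n/2}$, so one has the correct count of elements and chains, and must only redistribute.

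The redistribution step is the crucial one. I would define a \emph{surgery} between a chain $C_1 = (A_1 \subset \cdots \subset A_s)$ of length greater than $l+1$ and a chain $C_2 = (B_1 \subset \cdots \subset B_t)$ of length less than $l$: find an index $i$ and a bijection showing that the top segment $A_i \subset \cdots \subset A_s$ can be re-attached to $C_2$, yielding two chains closer to balanced. To keep this valid, one needs $B_t \subsetneq A_i$ (or the symmetric situation on the bottom). I would try to find such surgeries by matching arguments between the layers where top-elements of short chains and bottom-elements of long chains live: if one considers the bipartite graph on $[n]$-subsets in two relevant levels connected by containment, then Hall-type conditions (which do hold, by LYM-style counting) should provide valid transfers. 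By choosing a potential function such as $\sum_C (|C| - l)_+^2$ and showing each surgery strictly decreases it, one would obtain termination with the desired decomposition.

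The main obstacle is that elementary surgeries of the type above do not commute and may destroy previously balanced chains, so a direct greedy decrease in the potential is fragile. Moreover, the SCD is rigid: the top and bottom of each chain sit in specific symmetric levels, so for a long chain and a short chain there may be no individual element pair witnessing $B_t \subsetneq A_i$, even though the counting suggests one should exist globally. One way to sidestep this would be to perform the surgeries only along a carefully chosen layer (say the middle layer) and formulate the rebalancing as an integer flow problem on the bipartite comparability graph between successive layers, then invoke integrality of the associated polytope.

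An alternative, and perhaps more promising, route is inductive on $n$: use the product decomposition $2^{[n]} = 2^{[n-1]} \times \{0,1\}$ and try to glue a near-balanced chain partition of $2^{[n-1]}$ with itself, pairing up chains of $2^{[n-1]} \times \{0\}$ and $2^{[n-1]} \times \{1\}$ so that the joined chains equidistribute their lengths. The hard part will be controlling the parity and tail of the chain-length distribution through the induction, since the target values $l$ and $l+1$ change non-monotonically with $n$; any inductive argument must therefore absorb a bounded number of "error chains" and patch them up at the end, which is the step I would expect to be the main technical bottleneck.
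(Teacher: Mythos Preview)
The statement is labelled as a \emph{Conjecture} in the paper, and the text immediately following it says explicitly that it ``is still open''. The paper does not prove it; it only quotes an approximate version (the result from \cite{T}, stated as Theorem~\ref{unmp}) giving a partition into $\binom{n}{\lfloor n/2\rfloor}$ chains whose sizes lie between $0.8\sqrt{n}$ and $25\sqrt{n}$, which is all that is needed downstream. So there is no proof in the paper to compare your proposal against.

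What you have written is a research outline, not a proof, and the obstacles you yourself flag are genuine and are essentially why the problem is open. For the surgery route, Hall-type/LYM counting between levels only yields matchings between level \emph{sets}; it does not force those matchings to be compatible with the already-fixed chain endpoints, so knowing that short-chain tops can be matched somewhere does not give you, for each short chain, a long chain whose designated cut point sits above it. Recasting this as an integer flow on the level comparability graph does not help either: integrality produces a decomposition into paths, but gives no control over the \emph{lengths} of those paths, and controlling the length distribution is exactly the original question. For the inductive route via $2^{[n]}=2^{[n-1]}\times\{0,1\}$, note that the standard symmetric chain decomposition is built in precisely this way, so the induction by itself reproduces the full spread of lengths $1,3,5,\ldots,n+1$; any balancing must come from an extra rebalancing step at each stage, and that step is again the unresolved core. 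In short, none of the sketched approaches closes the argument, and as the paper states, the conjecture remains open.
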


 If this conjecture is true, then there exists a partition of $2^{[n]}$ into chains of size $(\sqrt{\pi/2}+o(1))\sqrt{n}$. While Conjecture \ref{furediconj} is still open, the author of this paper \cite{T} proved that $2^{[n]}$ can be partitioned into $\binom{n}{\lfloor n/2\rfloor}$ chains such that the size of each chain in the partition is between $0.8\sqrt{n}$ and $25\sqrt{n}$. This result is deduced from a more general one, which we shall state after introducing some further terminology.
 
 In what comes, we shall define the notion of \emph{unimodal normalized matching poset}, which itself requires a few preliminary definitions. A poset $Q$ is \emph{graded} if there exists a partition of its elements into subsets $A_{0},A_{1},\ldots,A_{n}$ such that $A_{0}$ is the set of minimal elements, and whenever $x\in A_{i}$ and $y\in Q$ such that $x<y$ with no $u\in Q$ satisfying $x<u<y$, then $y\in A_{i+1}$. If there exists such a partition, then it is unique and $A_{0},A_{1},\ldots,A_{n}$ are the \emph{levels} of $Q$.
 
 A graded poset $Q$ with levels $A_{0},...,A_{n}$ is \emph{unimodal} if $|A_{0}|,...,|A_{n}|$ is a unimodal sequence, that is, there exists $m\in \{0,...,n\}$ such that $|A_{0}|\leq\ldots\leq |A_{m}|$ and $|A_{m}|\geq |A_{m+1}|\geq\ldots\geq |A_{n}|$. Also, $Q$ is \emph{rank-symmetric}, if $|A_{i}|=|A_{n-i}|$ for $i=0,\ldots,n$.
 
 A graded poset $Q$ with levels $A_{0},...,A_{n}$ is a \emph{normalized matching poset}, if for every $0\leq i,j\leq n$ and $X\subset A_{i}$, we have
 $$\frac{|X|}{|A_{i}|}\leq \frac{|\Gamma(X)|}{|A_{j}|},$$
 where $\Gamma(X)$ is the set of elements in $A_{j}$ which are comparable with an element of $X$. Also, $Q$ has the \emph{LYM-property}, if every antichain $S\subset Q$ satisfies $\sum_{i=0}^{n}|S\cap A_{i}|/|A_{i}|\leq 1$. By a result of Kleitman \cite{K}, $Q$ is a normalized matching poset if and only if it has the LYM property.

It is easy to show that the Boolean lattice $2^{[n]}$ is a rank-symmetric, unimodal normalized matching poset. The following extension of Conjecture \ref{furediconj} was proposed by Hsu, Logan and Shahriari \cite{HLS}.

\begin{conjecture}\label{genconj}
	Let $Q$ be a rank-symmetric, unimodal normalized matching poset of width $w$. Then $Q$ can be partitioned into $w$ chains, each chain in the partition having size $l$ or $l+1$, where $l=\left\lfloor|P|/w\right\rfloor$.
\end{conjecture}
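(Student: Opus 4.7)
The plan is to produce the partition in two stages: first a symmetric chain decomposition of $Q$ extracted from the normalized matching property, then local surgeries to equalize the chain sizes to $l$ or $l+1$.

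For the first stage, I would imitate the classical de Bruijn--Tengbergen--Kruyswijk construction in this general setting. Between any two adjacent levels $A_{i}$ and $A_{i+1}$ with $i<m$, normalized matching together with unimodality (so $|A_{i}|\leq|A_{i+1}|$) and K\"{o}nig's theorem yield a matching saturating $A_{i}$. Starting from the widest level $A_{m}$ and working outward in both directions, using rank-symmetry to pair levels on either side of the middle, one obtains a partition of $Q$ into exactly $|A_{m}|=w$ saturated symmetric chains. A chain that starts at level $j$ ends at level $n-j$, so its length is $n-2j+1$; chain lengths therefore range from $1$ up to $n+1$ with average close to $l$.

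For the second stage, I would define a local surgery move: given a long chain $C$ and a short chain $C'$, if some element $x\in C$ is comparable (at an adjacent level) to an element $y\in C'$ not already matched to $C$, detach the appropriate segment of $C$ above $x$ and reattach it to $C'$ along the edge $xy$. These moves could be organised via a transportation problem, treating the $w$ chains as bins that must collectively receive $|P|$ elements distributed as $l$ or $l+1$ per bin; the normalized matching property supplies rich matchings between any two levels of the relevant induced sub-poset, providing the combinatorial flexibility to execute many such exchanges.

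The main obstacle is the global coordination of the surgeries. The normalized matching property is local --- it constrains one pair of levels at a time --- whereas the conjecture is a strong global statement requiring surgeries to be consistent across all levels simultaneously. A realistic intermediate goal, and the one I would actually attempt first, is a weaker bound $|C_{i}|\in[l-O(1),l+O(1)]$ via an LP relaxation in which fractional chain covers are permitted, followed by integral rounding; the main difficulty will be the integrality gap, since LYM alone does not obviously suffice to close it to a single unit of slack. The fact that even Conjecture \ref{furediconj}, the Boolean lattice case, remains open strongly suggests that some new structural ingredient beyond the LYM property is needed to reach the exact $\{l,l+1\}$ target.
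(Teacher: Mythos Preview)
There is nothing to compare against: the statement you are attempting is Conjecture~\ref{genconj}, and the paper explicitly records it as open (``Naturally, this conjecture is open as well''), proving only the much weaker Theorem~\ref{unmp} in its place. So your task was not to reproduce a proof from the paper but to attack an open problem, and indeed your final paragraph already concedes that you do not expect the outline to close.

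On the substance of your outline: the first stage is essentially Griggs' theorem that a rank-symmetric, rank-unimodal normalized matching poset admits a symmetric chain decomposition, so the conclusion there is safe. Your sketch of it, however, is not quite a proof: piecing together level-to-level matchings that each saturate the smaller side yields $w$ chains, but nothing in your argument forces a chain that reaches down to level $j$ to reach up to level $n-j$. Rank-symmetry tells you only that $|A_{j}|=|A_{n-j}|$, not that the matchings on the two halves are mirror images of one another; one needs the ``nested'' structure of the matchings (as in Griggs' argument) to get genuinely symmetric chains.

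The second stage is where the conjecture actually lives, and here your proposal is an honest research plan rather than a proof. The surgery moves you describe are exactly the kind of local operations people have tried; the difficulty, as you say, is global coordination, and an LP relaxation with LYM as the only constraint will not obviously have integrality gap at most one. Your closing remark is the correct assessment: since even the Boolean special case (Conjecture~\ref{furediconj}) is open, no argument using only the abstract hypotheses listed here is currently known to reach the exact $\{l,l+1\}$ conclusion, and your outline does not supply the missing ingredient.
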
 

Naturally, this conjecture is open as well. The author of this paper \cite{T} proved the following result concerning Conjecture \ref{genconj}.

\begin{theorem}(Tomon \cite{T})\label{unmp}
		Let $Q$ be a unimodal normalized matching poset of width $w$. Then there exists a chain partition of $Q$ into $w$ chains such that the size of each chain in the partition is at least $\frac{|Q|}{2w}-\frac{1}{2}$. Also, there exists a chain partition of $Q$ into $w$ chains such that the size of each chain in the partition is at most $\frac{2|Q|}{w}+5$.
\end{theorem}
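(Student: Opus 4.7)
The plan is to construct the required chain partitions directly, using the normalized matching property as the main engine. The starting observation is that, since $A_{m}$ is an antichain of size $w$, in every partition of $Q$ into exactly $w$ chains each chain must contain exactly one element of $A_{m}$, so the chains are naturally indexed by $A_{m}$. To keep the bookkeeping tractable I would split the problem at the modal level: set $Q^{-}=\bigcup_{j\le m}A_{j}$ and $Q^{+}=\bigcup_{j\ge m}A_{j}$, each of which is itself a unimodal NMP poset whose widest level $A_{m}$ sits at an extreme (a \emph{funnel}). A chain partition of $Q^{-}$ into $w$ chains and one of $Q^{+}$ into $w$ chains, each with representatives in $A_{m}$, can be glued along $A_{m}$ into a chain partition of $Q$, with each resulting chain having size equal to the sum of its two halves minus one. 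Thus it suffices to prove a funnel version of the theorem.

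For a funnel, say $Q^{+}$, I would build the chains level by level starting from $A_{m}$: at level $A_{m+i}$ the task is to extend some of the $w$ current chains by one element each, where the chains eligible for a given $u\in A_{m+i}$ are those whose current top lies below $u$. Because the paper's NMP condition is stated between \emph{any} two levels (not only consecutive ones), matching/Hall-type conditions apply directly to the relevant extension step, and the goal is to use these to produce a \emph{balanced} extension in which the incoming elements are routed preferentially to the currently shortest eligible chains. If one can arrange that after processing each level the $w$ chain sizes remain within $O(1)$ of one another, then the maximum chain size is at most $2|Q^{+}|/w+O(1)$ and the minimum is at least $|Q^{+}|/w-O(1)$; gluing the two halves along the shared level $A_{m}$ then yields the bounds $2|Q|/w+5$ and $|Q|/(2w)-1/2$ of the theorem.

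The main obstacle I anticipate is the balanced-extension step itself. NMP delivers a matching saturating the smaller side between any two levels, but a priori does not let us choose, among the several chains eligible to absorb a given new element, the shortest one. I would attack this via an exchange/augmenting-path argument: whenever the greedy construction leaves two chains differing in size by more than $1$, the NMP condition together with a Hall-type argument along an alternating path between those two chains should let us rotate elements and strictly improve the balance, and one iterates until balance is achieved. The small additive constants $-1/2$ and $+5$ in the theorem appear to reflect the fact that this rebalancing cannot always be carried out at the very narrowest levels of a funnel, where there simply are not enough eligible chains to absorb or release one more element; these boundary effects are precisely what the additive constants are absorbing.
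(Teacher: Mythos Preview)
First, note that the present paper does not actually prove this theorem: it is quoted from \cite{T} and used as a black box (see the sentence following the statement and the derivation of Corollary~\ref{chain}). So there is no ``paper's own proof'' here to compare against; one can only assess your sketch on its own merits.

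On that score, there is a genuine overreach. You propose to arrange that at every stage of the level-by-level construction the $w$ chain sizes stay within $O(1)$ of one another. If that could be done, then in each funnel every chain would have size $|Q^{\pm}|/w + O(1)$, and after gluing along $A_{m}$ every chain of $Q$ would have size $|Q|/w + O(1)$. For rank-symmetric $Q$ this is precisely, up to an additive constant, Conjecture~\ref{genconj} of the paper, which is explicitly stated there as open (and whose special case is F\"uredi's Conjecture~\ref{furediconj}). In other words, the balancing step you flag as ``the main obstacle'' is not just a detail to be filled in: if it worked in the form you describe it would resolve a well-known open problem, so it cannot be expected to go through as written. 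The factor-of-two slack in the actual bounds $|Q|/(2w)-1/2$ and $2|Q|/w+5$ reflects exactly that one does \emph{not} know how to keep the chains $O(1)$-balanced.

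There is also a concrete technical gap in the exchange argument you sketch. After several levels have been processed, the current tops of the $w$ chains lie in many different levels, so the matching problem you face at step $i$ is not between two fixed levels $A_{i}$ and $A_{j}$. The NMP inequality as stated in the paper controls $|\Gamma(X)|$ only for $X$ contained in a single level; it does not directly supply the Hall condition you need when the set of chain-tops is spread across several levels. Your augmenting-path idea would have to be made precise in this multi-level setting, and it is exactly here that the strong $O(1)$-balance target breaks down.
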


Let us remark that the poset $Q$ in Theorem \ref{unmp} need not be rank-symmetric. We shall use the following simple corollary of this theorem.

\begin{corollary}\label{chain}
	Let $k,n$ be positive integers and let $w$ be the width of the grid $[k]^{n}$. Then $[k]^{n}$ can be partitioned into chains such that the size of each chain $C$ in the partition satisfies $$\frac{k^{n}}{4w}\leq |C|\leq\frac{k^{n}}{w}.$$ 	
\end{corollary}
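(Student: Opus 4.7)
My plan is to apply Theorem \ref{unmp} to $[k]^n$ and then refine the resulting chain partition to enforce the upper bound on chain sizes.

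I first verify that $[k]^n$ is a unimodal normalized matching poset so that Theorem \ref{unmp} applies. The grid is graded with the level of $(x_1,\ldots,x_n)$ equal to $\sum_i x_i - n$, and its level sizes are the coefficients of the polynomial $(1+x+\cdots+x^{k-1})^n$. A standard induction on $n$, using the fact that the convolution of two unimodal nonnegative sequences with no internal zeros is again unimodal, shows that these coefficients form a unimodal sequence. The normalized matching (equivalently LYM) property of $[k]^n$ is classical: it follows by induction on $n$ from the fact that a direct product of two normalized matching posets is normalized matching, together with the trivial observation that the chain $[k]$ has this property.

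Applying the first half of Theorem \ref{unmp} yields a partition of $[k]^n$ into $w$ chains, each of length at least $\tfrac{k^n}{2w}-\tfrac{1}{2}$. The case $k=1$ is trivial, so assume $k\geq 2$. Partitioning the grid into the ``vertical'' chains $\{(x_1,\ldots,x_{n-1},y):y\in[k]\}$ indexed by $(x_1,\ldots,x_{n-1})\in[k]^{n-1}$ shows $w\leq k^{n-1}\leq k^n/2$, hence $k^n\geq 2w$ and therefore $\tfrac{k^n}{2w}-\tfrac{1}{2}\geq \tfrac{k^n}{4w}$; the desired lower bound is thus already satisfied on every chain of this initial partition.

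To enforce the upper bound, I refine as follows. Set $T:=\lfloor k^n/w\rfloor$, which is at least $2$ since $k^n\geq 2w$. Any chain of length at most $T$ is kept intact; a chain of length $L>T$ is cut into $m:=\lceil L/T\rceil$ consecutive subchains of sizes differing by at most one. The choice of $m$ gives $L/m\leq T$, so every piece has length at most $T\leq k^n/w$; and $m\leq L/T+1$ together with $L\geq T$ forces $L/m\geq T/2$, so every piece has length at least $\lfloor T/2\rfloor$, which a brief case check using $k^n/w<T+1$ shows is $\geq k^n/(4w)$ whenever $T\geq 2$. The main obstacle is the first step, since verifying the unimodal normalized matching property of $[k]^n$ requires invoking classical results that are not proved in the paper; once this is granted, the subdivision is a routine arithmetic check.
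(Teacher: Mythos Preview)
Your argument follows the paper's exactly: verify that $[k]^n$ is a unimodal normalized matching poset, apply Theorem~\ref{unmp}, observe that the lower bound $k^n/(2w)-1/2$ already exceeds $k^n/(4w)$, and then subdivide any chain longer than $k^n/w$. Your subdivision step is in fact spelled out more carefully than in the paper, which simply asserts that a long chain can be cut into pieces of size between $k^n/(4w)$ and $k^n/w$.

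One caveat: the justification you give for the normalized matching property of $[k]^n$---namely that ``a direct product of two normalized matching posets is normalized matching''---is not correct; this closure under products fails in general. The specific fact that products of chains are normalized matching is nevertheless classical (the paper just cites Anderson~\cite{A}, p.~60--63, for it), so the conclusion you need is true and your overall proof stands.
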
 

\begin{proof}
	Note that $[k]^{n}$ is graded with levels $$A_{i}=\{(a_{1},...,a_{n})\in [k]^{n}:a_{1}+...+a_{n}=n+i\}$$ for $i=0,...,kn-n$.  It is well known that $[k]^{n}$ is a rank-symmetric, unimodal normalized matching poset, see p.60-63 in the book of Anderson \cite{A}, for example. Hence, by Theorem \ref{unmp}, $[k]^{n}$ can be partitioned into chains such that the size of each chain is at least $\max\{k^{n}/2w-1/2,1\}\geq  k^{n}/4w$. If the size of a chain $C$ in this partition is larger than $k^{n}/w$, then cut $C$ into smaller pieces such that the size of each piece is at least $k^{n}/4w$, and at most $k^{n}/w$. The resulting chain partition suffices.
\end{proof}

\section{Proof of the main theorem}\label{sect:thm}

For the proof of Theorem \ref{mainthm}, we need the following estimate on the width of $[k]^{n}$.

\begin{lemma}\label{estimate}
Let $k,n$ be positive integers such that $k\geq 2$. Then $w([k]^{n})=\Theta(k^{n-1}/\sqrt{n}).$	
\end{lemma}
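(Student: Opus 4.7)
The plan is to reduce to estimating the size of the largest level of $[k]^n$ and then apply a local central limit theorem.

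First, as noted in the proof of Corollary \ref{chain}, the grid $[k]^n$ is graded with levels $A_i = \{(a_1,\ldots,a_n) : a_1+\ldots+a_n = n+i\}$ for $i = 0,\ldots,(k-1)n$, and is a normalized matching poset. Since the LYM property gives that every antichain $S$ satisfies
$$|S| = \sum_i |S\cap A_i| \leq \max_j |A_j|\cdot\sum_i \frac{|S\cap A_i|}{|A_i|} \leq \max_j |A_j|,$$
while each level is itself an antichain, we conclude $w([k]^n) = \max_i |A_i|$.

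Next, I would interpret $|A_i|$ probabilistically. Let $X_1,\ldots,X_n$ be i.i.d.\ uniform on $\{1,\ldots,k\}$, with $\mu := \mathbb{E}[X_j] = (k+1)/2$ and $\sigma^2 := \mathrm{Var}(X_j) = (k^2-1)/12$. Then $|A_i|/k^n = \Pr[X_1+\ldots+X_n = n+i]$, or equivalently $|A_i|$ is the coefficient of $x^{n+i}$ in $(x+x^2+\ldots+x^k)^n$. A local central limit theorem for lattice-valued i.i.d.\ sums gives
$$\max_s \Pr[X_1+\ldots+X_n = s] = \Theta\!\left(\frac{1}{\sigma\sqrt{n}}\right),$$
with the maximum attained near $s=\lceil n(k+1)/2\rceil$. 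Since $\sigma = \Theta(k)$ for $k\geq 2$, multiplying by $k^n$ yields $w([k]^n) = \max_i |A_i| = \Theta(k^{n-1}/\sqrt{n})$.

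The main obstacle is ensuring that the implicit constants in the LCLT are uniform in both $n$ and $k$, not just in $n$ for fixed $k$. This can be handled via a quantitative Berry--Esseen/Esseen smoothing argument applied to the normalized variables $(X_j-\mu)/\sigma$, whose characteristic functions admit uniform-in-$k$ bounds away from $1$ outside a fixed neighbourhood of the origin. Alternatively, one can proceed entirely elementarily: for the lower bound, note that by Chebyshev's inequality the $O(k\sqrt{n})$ levels $A_i$ with $|i - n(k-1)/2| \leq C k\sqrt{n}$ contain at least a constant fraction of the total mass $k^n$, so by averaging the largest of them has size $\Omega(k^{n-1}/\sqrt{n})$; for the upper bound, one analyzes the consecutive ratios $|A_{i+1}|/|A_i|$ (which can be read off the generating function $x^n(1-x^k)^n/(1-x)^n$) together with unimodality to pin down the order of the maximum. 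Either route delivers the claimed two-sided estimate.
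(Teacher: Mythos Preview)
Your argument is correct, but it takes a different route from the paper. The paper's proof is essentially a one-line citation: it quotes from Anderson's book the general estimate that the width of $[k_1]\times\cdots\times[k_n]$ (with all $k_i\ge 2$) is $\Theta(k_1\cdots k_n/\sqrt{A})$ with $A=\sum_i(k_i^2-1)$, and then specializes to $k_1=\cdots=k_n=k$, obtaining $A=n(k^2-1)$ and hence $w([k]^n)=\Theta(k^{n-1}/\sqrt{n})$.

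You instead reprove the special case from scratch. Your reduction $w([k]^n)=\max_i|A_i|$ via LYM is exactly the standard one (and is also behind the cited result). For the size of the largest level, your Chebyshev-plus-averaging lower bound is clean and fully rigorous with uniform constants. For the upper bound, the LCLT/characteristic-function route you describe does work uniformly in $k$: the modulus of the characteristic function of the uniform law on $\{1,\dots,k\}$ is $|\sin(k\theta/2)|/(k|\sin(\theta/2)|)$, and bounding $\int_{-\pi}^{\pi}|\phi(\theta)|^n\,d\theta$ in the standard way yields $O(1/(\sigma\sqrt{n}))$ with absolute constants. Your alternative ``consecutive ratios'' suggestion for the upper bound is too vague to stand on its own, but since the Fourier route already suffices this is not a gap. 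In short: the paper outsources the analysis to a reference, while you carry it out directly; your approach is more self-contained, the paper's is shorter.
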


\begin{proof}
	We shall use the following bound on the width of grids, which can be found on p.63-68 in \cite{A}. Let $k_{1},...,k_{n}\geq 2$ be integers and let $A=\sum_{i=1}^{n}(k_{i}^{2}-1)$. Then the width of the grid $[k_{1}]\times...\times[k_{n}]$ is $\Theta(k_{1}...k_{n}/\sqrt{A})$. Setting $k_{1}=...=k_{n}=k$, we get the desired result. 
\end{proof}

In the proof of our main theorem, we shall exploit that $[k]^{n}$ is the \emph{cartesian product of posets}. For $i\in [n]$, let $P_{i}=(X_{i},<_{i})$ be posets, then $P_{1}\times...\times P_{n}=(X_{1}\times...\times X_{n},\leq)$ is the cartesian product of $P_{1},...,P_{n}$, where $(p_{1},...,p_{n})\leq (q_{1},...,q_{n})$, if $p_{i}\leq_{i}q_{i}$ for $i\in [n]$. It is easy to see that if $n=n_{1}+...+n_{d}$, then the grid $[k]^{n}$ is isomorphic to the cartesian product $[k]^{n_{1}}\times...\times [k]^{n_{d}}$. With a slight abuse of notation, we shall identify the cartesian product with $[k]^{n}$ itself.

Now we are ready to prove Theorem \ref{mainthm}.

\begin{proof}[Proof of Theorem \ref{mainthm}.] If $k=1$, then $[k]^{n}$ has only one element, so suppose that $k\geq 2$. Let $d=\dim P$ and let $c_{P}$ be the constant defined in Corollary \ref{cor:pattern}. Write $n=n_{1}+...+n_{d}$, where $n_{i}\in\{\lfloor n/d\rfloor,\lceil n/d\rceil\}$. As $n\geq d$, we have $n_{1},...,n_{d}\geq 1$. Let $w_{i}$ be the width of $[k]^{n_{i}}$ and let $l_{i}=k^{n_{i}}/4w_{i}$. By Lemma \ref{estimate}, we have 
	$$w_{i}=\Theta\left(\frac{k^{n_{i}-1}}{\sqrt{n/d}}\right)\mbox{ and } l_{i}=\Theta(k\sqrt{n/d}).$$
	
	By Corollary \ref{chain}, for $i\in [d]$, there exist a partition of $[k]^{n_{i}}$ into chains $C_{i,1},...,C_{i,s_{i}}$ such that $l_{i}\leq |C_{i,j}|\leq 4l_{i}$ for $j\in [s_{i}]$. These chain partitions yield a partition of $[k]^{n}=[k]^{n_{1}}\times...\times [k]^{n_{d}}$ into the collection of cartesian products $G_{j_{1},...,j_{d}}=C_{1,j_{1}}\times...\times C_{d,j_{d}}$, where $(j_{1},...,j_{d})\in [s_{1}]\times...\times [s_{d}]$.
	
Note that the poset $G_{j_{1},...,j_{d}}$ is isomorphic to the $d$-dimensional grid $[|C_{1,j_{1}}|]\times...\times [|C_{d,j_{d}}|]$. Let $m=\max_{i\in [d]}{l_{i}}$, then $m=\Theta(k\sqrt{n/d})$, $|G_{j_{1},...,j_{d}}|=\Theta(1)^{d}m^{d}$, and  $G_{j_{1},...,j_{d}}$ is isomorphic to a subset of the grid $[4m]^{d}$. Hence, as $G_{j_{1},...,j_{d}}\cap S$ does not contain an induced copy of $P$, we have that 
$$|G_{j_{1},...,j_{d}}\cap S|\leq c_{P}(4m)^{d-1}=O(1)^{d}c_{P}\frac{|G_{j_{1},...,j_{d}}|}{m}.$$

But then 
\begin{align*}
|S|&=\sum_{(j_{1},...,j_{d})\in [s_{1}]\times...\times [s_{d}]}|G_{j_{1},...,j_{d}}\cap S|=O(1)^{d}c_{P}\sum_{(j_{1},...,j_{d})\in [s_{1}]\times...\times [s_{d}]}\frac{|G_{j_{1},...,j_{d}}|}{m}\\
   &=O(1)^{d}c_{P}\frac{k^{n}}{m}=O(1)^{d}c_{P}\frac{k^{n-1}}{\sqrt{n/d}}=O(1)^{d}c_{P}\sqrt{d}w
\end{align*}
Hence, setting $C_{P}=C_{0}^{d}c_{P}$ with some large absolute constant $C_{0}$, we have $|S|\leq C_{P}w$.

\end{proof}

\section{Lubell function}\label{sect:lubell}

For a graded poset $Q$ with levels $A_{0},...,A_{n}$, define the \emph{Lubell mass} of the subset $S\subset Q$ as 
$$L_{Q}(S)=\sum_{i=0}^{n}\frac{|S\cap A_{i}|}{|A_{i}|}.$$ 
Note that every level of $Q$ is an antichain, so we have the trivial inequality $$\frac{|S|}{w(Q)}\leq L_{Q}(S).$$
The following strengthening of Theorem \ref{mainthm0} was conjectured by Lu and Milans \cite{LM}, and it was proved by M\'{e}roueh \cite{M}.

\begin{theorem}(M\'{e}roueh \cite{M})\label{Lubell}
	Let $P$ be a poset. There exists a constant $c(P)$ such that for every positive integer $n$, if $S\subset 2^{[n]}$ does not contain a copy of $P$, then $$L_{2^{[n]}}(S)\leq c(P).$$ 
\end{theorem}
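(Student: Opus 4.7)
The plan is to combine the main theorem with a weighted averaging over block decompositions that directly generates the Lubell weights. The basic observation is that
\[
L_{2^{[n]}}(S)=\mathbb{E}_\sigma|S\cap\mathcal{C}_\sigma|,
\]
where $\sigma$ is a uniformly random permutation of $[n]$ and $\mathcal{C}_\sigma=\{\sigma([i]):0\leq i\leq n\}$ is its associated maximal chain, so bounding this expectation by a constant is equivalent to the target statement.

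Adapting the proof of Theorem~\ref{mainthm}, set $d=\dim P$ and draw a uniformly random ordered partition $[n]=B_1\sqcup\cdots\sqcup B_d$ with $|B_j|=n/d$, together with independent uniform random maximal chains $C_j\subset 2^{B_j}$. The product $G=C_1\times\cdots\times C_d$ is a random $d$-dimensional subgrid isomorphic to $[n/d+1]^d$, so Corollary~\ref{cor:pattern} gives $|S\cap G|\leq c_P(n/d+1)^{d-1}$. Since a uniform random maximal chain in $2^{B_j}$ passes through $Y\subset B_j$ with probability $1/\binom{|B_j|}{|Y|}$, one computes
\[
\Pr[X\in G]=\mathbb{E}_B\prod_{j=1}^d\frac{1}{\binom{n/d}{|X\cap B_j|}}=\frac{N(|X|)}{\binom{n}{|X|}},
\]
where $N(i)$ counts compositions $i=k_1+\cdots+k_d$ with $0\leq k_j\leq n/d$. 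Because $N(i)\geq 1$, this produces
\[
L_{2^{[n]}}(S)\leq\sum_{X\in S}\frac{N(|X|)}{\binom{n}{|X|}}=\mathbb{E}[|S\cap G|]\leq c_P(n/d+1)^{d-1}.
\]

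For $d\geq 2$ this bound is $n$-dependent, so one needs to refine it. The function $N(i)$ is sharply concentrated: $N(i)=\Theta_d(n^{d-1})$ for $i$ in a window of width $\Theta(n)$ around $n/2$, but shrinks to $O_d(1)$ near the extremes. Consequently the averaging already gives a constant bound $O_d(c_P)$ on the central contribution to $L_{2^{[n]}}(S)$, while the tail contribution from levels far from $n/2$ must be treated separately. For the tail, I would iterate Theorem~\ref{mainthm} on sub-lattices $2^A\subset 2^{[n]}$ of various sizes $|A|$ chosen to align with the level of interest, obtaining $|S\cap 2^A|\leq C_P\binom{|A|}{\lfloor|A|/2\rfloor}$, and then average over $A$ to extract a level-sensitive estimate.

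The main obstacle is establishing a level-sensitive decay of $s_i/\binom{n}{i}$, the fraction of level $i$ occupied by $S$, as $|i-n/2|$ grows. The crude bound $s_i\leq\binom{n}{i}$ yields only $O(n)$ for the tail; obtaining the constant bound seems to require either a genuinely finer estimate on $s_i$ (a Lubell-weighted analogue of Corollary~\ref{cor:pattern}) or a direct random-chain argument of the kind used by M\'eroueh, and this is the step I expect to be the hardest.
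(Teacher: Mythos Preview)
The paper does not prove Theorem~\ref{Lubell}; it is quoted as M\'eroueh's result and only cited, so there is no argument here to compare against.

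On its own merits, your proposal is not a proof but a partial plan, and you say so yourself. The identity $\Pr[X\in G]=N(|X|)/\binom{n}{|X|}$ is correct, and so is the resulting inequality
\[
L_{2^{[n]}}(S)\;\le\;\sum_{X\in S}\frac{N(|X|)}{\binom{n}{|X|}}\;=\;\mathbb{E}\,|S\cap G|\;\le\;c_P\Bigl(\tfrac{n}{d}+1\Bigr)^{d-1},
\]
but this grows like $n^{d-1}$. Your observation that $N(i)=\Theta_d(n^{d-1})$ on a central window does salvage an $O_d(c_P)$ bound for the contribution of levels $|i-n/2|\le cn$. The problem is entirely in the tail, and here the scheme breaks down: for $i=o(n)$ one has $N(i)=\Theta_d(i^{d-1})$, so the weighted inequality only tells you that $\sum_i i^{d-1}\,s_i/\binom{n}{i}$ is $O(n^{d-1})$, which places no useful constraint on $\sum_i s_i/\binom{n}{i}$ over small $i$.

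Your suggested repair, averaging Theorem~\ref{mainthm} over sublattices $2^{A}$ with $|A|\approx 2i$, does not close the gap either. That averaging yields bounds on quantities of the form $\sum_{X\in S}\binom{n-|X|}{a-|X|}\big/\binom{n}{a}$, and these weights are not the Lubell weights $1/\binom{n}{|X|}$; the mismatch is exactly a factor of $\binom{a}{|X|}$, which reproduces the same $n$-dependence you were trying to remove. In short, any method based solely on counting how often $X$ lands in a random $d$-grid of a single scale will undercount the extreme levels, and stitching together scales dyadically still leaves $\Theta(\log n)$ windows each contributing $\Theta(1)$. Getting the constant bound genuinely requires a random structure whose hitting probability at \emph{every} level is $\Theta_d\bigl(1/\binom{n}{i}\bigr)$, and building such a structure is precisely the substance of M\'eroueh's argument; it is not a consequence of Theorem~\ref{mainthm} or Corollary~\ref{cor:pattern}.
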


Also, a simple consequence of the LYM property is that if $Q$ is a normalized matching poset and $P$ is a chain, then $L_{Q}(S)\leq |P|-1$ holds for every $S\subset Q$ not containing a copy of $P$. This is true because $S$ is the union of at most $|P|-1$ antichains and the Lubell mass of any antichain in $Q$ is at most $1$ by the LYM property. In fact, we have $L_{[k]^{n}}(S)\leq |P|-1$ for every positive integer $k$ and $n$, and $S\subset [k]^{n}$ not containing a copy of the chain $P$.

 Hence, it might be natural to conjecture the following common strengthening of Theorem \ref{mainthm} and Theorem \ref{Lubell}. 
 
 \begin{displayquote}If $P$ is a poset, there exist a constant $C(P)$ such that for every positive integer $k,n$ satisfying $k\geq 2$, $n\geq \dim P$, if $S\subset [k]^{n}$ does not contain a copy of $P$, then $L_{[k]^{n}}(S)\leq C(P)$.
 \end{displayquote}

However, it does not take much effort to show that this conjecture is false, even for small posets $P$. An immediate counterexample is when $P$ is an antichain on $2$ elements and $S\subset [k]^{2}$ is a maximal chain. In this case, $S$ clearly does not contain a copy of $P$, but $L_{[k]^{2}}(S)=1/k+2\sum_{i=1}^{k-1}1/i=\Theta(\log k)$. Let us present another example which shows that this conjecture cannot be saved even by increasing the lower bound on $n$.

Let $K$ be the poset on three elements $a,b,c$ with the only comparable pair $a<b$.

\begin{claim}
	For every positive integer $k,n$ satisfying $n\geq 2$, there exists $S\subset [k]^{n}$ such that $S$ does not contain a copy of $K$ and $$L_{[k]^{n}}(S)\geq \Omega(1)^{n}\log k.$$
\end{claim}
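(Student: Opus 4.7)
The plan is to construct $S$ as a disjoint union of antichains $T_1 \cup \cdots \cup T_m$ arranged in \emph{strictly stacked} order, meaning every element of $T_i$ is below every element of $T_j$ whenever $i<j$. Any such union avoids $K$ by an elementary case check, and doubling the sizes of the blocks from which the antichains are drawn makes the Lubell masses accumulate to $\Omega(1)^n \log k$.

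Concretely, set $s_j = 2^{j-1}$ and $p_j = 2^j - 1$, and let $m$ be the largest index with $p_m \leq k$. For $j = 1, \ldots, m$, the sub-grid $B_j = [p_{j-1}+1, p_j]^n \subset [k]^n$ is isomorphic to $[s_j]^n$, and we take $T_j$ to be a maximum antichain of $B_j$; by Lemma \ref{estimate}, $|T_j| = \Omega(s_j^{n-1}/\sqrt n)$, and $T_j$ sits in a single level of $B_j$, hence in a single level of $[k]^n$. Put $S = \bigcup_j T_j$. The coordinate ranges $[p_{j-1}+1, p_j]$ being disjoint and increasing in $j$ forces $T_j < T_{j'}$ whenever $j < j'$, because each coordinate of an element of $B_j$ is then strictly smaller than each coordinate of an element of $B_{j'}$. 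To verify $K$-freeness, suppose $x,y,z \in S$ satisfy $x<y$ with $z$ incomparable to both; writing $x \in T_a, y \in T_b, z \in T_c$, the antichain property of each $T_\cdot$ together with the stacking yields $a < b$, and each of the five cases $c<a$, $c=a$, $a<c<b$, $c=b$, $c>b$ then forces $z$ to be comparable to $x$ or to $y$, contradicting the hypothesis.

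For the Lubell mass, $T_j$ lies at level $l_j = (3n/2)(2^{j-1}-1)$ of $[k]^n$. When $j \leq \log_2(k/n) + O(1)$ we have $l_j \leq k-1$, so $|A_{l_j}| = \binom{n-1+l_j}{n-1} \leq ((3n/2)\, 2^{j-1})^{n-1}/(n-1)!$ (using $l_j + n \leq (3n/2)\,2^{j-1}$). Combined with $|T_j| = \Omega(2^{(j-1)(n-1)}/\sqrt n)$, the $2^{(j-1)(n-1)}$ factors cancel and a short Stirling computation yields $L_{[k]^n}(T_j) = \Omega((2/(3e))^n)$, uniformly in $j$. Summing over the $\Omega(\log(k/n))$ admissible blocks gives $L_{[k]^n}(S) = \Omega((2/(3e))^n \log(k/n))$, which is $\Omega(1)^n \log k$ once $k \geq n^2$. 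For $k < n^2$ a trivial fallback works: the singleton $T_1 = \{(1,\ldots,1)\}$ alone contributes $L_{[k]^n}(T_1) = 1$, which already exceeds $(2/(3e))^n \log k$ since $\log k < 2 \log n \leq (3e/2)^n$ for every $n \geq 2$. The main obstacle is the Lubell estimate itself: arranging that the geometric doubling of the block widths $s_j$ precisely cancels the polynomial blow-up of $|A_{l_j}|$ hinges on the Stirling calculation keeping the multiplicative constants independent of $j$; the rest — the block construction and the $K$-freeness check — is routine combinatorial bookkeeping.
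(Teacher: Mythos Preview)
Your proof is correct and follows essentially the same approach as the paper: the doubling blocks $B_j=[2^{j-1},2^{j}-1]^n$, the maximum antichains $T_j$ inside them, the stacking argument for $K$-freeness, and the Lubell estimate via $|T_j|=\Theta(2^{(j-1)(n-1)}/\sqrt{n})$ against a binomial bound on the ambient level size are all identical to the paper's proof (up to reindexing). The only difference is that you insist on the \emph{equality} $|A_{l_j}|=\binom{n-1+l_j}{n-1}$, which forces the restriction $l_j\le k-1$ and the separate fallback for $k<n^2$; the paper simply uses the inequality $|A_{r_i-n}|\le \binom{r_i}{n-1}$ (valid for every block, since dropping the upper bound $a_i\le k$ only increases the count), and thereby avoids the case split entirely.
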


\begin{proof}
	Let $A_{0},...,A_{kn-n}$ be the levels of $[k]^{n}$. Also, let $s=\lfloor\log_{2}k\rfloor-1$, and for $i=0,...,s-1$, let $$B_{i}=\{(a_{1},...,a_{n})\in[k]^{n}:\forall j\in[d],2^{i}\leq a_{j}<2^{i+1}\}.$$
	Also, let $r_{i}=\lfloor(3\cdot 2^{i}-1)n/2\rfloor$ and $$S_{i}=\{(a_{1},...,a_{n})\in B_{i}:a_{1}+...+a_{n}=r_{i}\}.$$ Note that $B_{i}$ is isomorphic to $[2^{i}]^{n}$ and $S_{i}$ is a maximal sized antichain in $B_{i}$. Set $S=\cup_{i=0}^{s}S_{i}$. 
	
	First of all, we show that $S$ does not contain a copy of $K$. Suppose that $0\leq i<j\leq s-1$, then $x<y$ holds for any two elements $x\in B_{i}$ and $y\in B_{j}$. Suppose that $\{a,b,c\}\in S$ is a copy of $K$. As $a$ and $c$ are incomparable, we must have that $a,c\in S_{i}$ for some $i\in \{0,...,s-1\}$. Also, $b$ and $c$ are incomparable, so $b\in S_{i}$ as well. But then $a$ and $b$ are incomparable as $S_{i}$ is an antichain, contradiction.
	
	Now, let us estimate the Lubell mass of $S$. As $S_{i}$ is a maximal sized antichain in $B_{i}$, we have $|S_{i}|=\Theta(2^{i(n-1)}/\sqrt{n})$ by Lemma \ref{estimate}. But $S_{i}$ is contained in the level $A_{r_{i}-n}$, which satisfies
	$$|A_{r_{i}-n}|\leq \binom{r_{i}}{n-1}<\left(\frac{er_{i}}{n-1}\right)^{n-1}\leq O(1)^{n}\cdot 2^{i(n-1)}.$$
	Hence, we have
	$$L_{[k]^{n}}(S)=\sum_{i=0}^{s-1}\frac{|S_{i}|}{|A_{r_{i}-n}|}\geq \Omega(1)^{n}s=\Omega(1)^{n}\log k.$$
\end{proof}

 On the other hand, it is not hard to show that with $n$ fixed, the Lubell mass of a set $S\subset [k]^{n}$ not containing a copy of $P$ is bounded by some multiple of $\log k$.
 
 \begin{claim}
 	Let $P$ be a poset and  let $n\geq \dim P$ be an integer. There exist constants $C(P)$ and $\alpha_{n}$ such that for every positive integer $k$, if $S\subset [k]^{n}$ does not contain a copy of $P$, then $$L_{[k]^{n}}(S)\leq C(P)\alpha_{n}\log k.$$
 \end{claim}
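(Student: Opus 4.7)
The plan is to dyadically decompose $[k]^n$ by rank and apply Theorem \ref{mainthm} to a nested family of sub-grids. Write $r(x)=\sum_{l=1}^{n}(x_l-1)$ and $M=n(k-1)$, so that the level $A_i$ of $[k]^n$ equals $\{x:r(x)=i\}$. The key geometric observation is that $r(x)<t$ forces $x_l\le t$ for every coordinate, hence $\{x\in[k]^n : r(x)<2^j\}\subset [\min(2^j,k)]^n$. Applying Theorem \ref{mainthm} to this sub-grid and using Lemma \ref{estimate} gives
\[
\bigl|S\cap\{x:r(x)<2^j\}\bigr|\le C_P\, w\bigl([\min(2^j,k)]^n\bigr)=O_n\bigl(\min(2^j,k)^{n-1}\bigr).
\]

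For each dyadic ``rank ring'' $R_j=\{x: 2^{j-1}\le r(x)<2^j\}$, I would bound its Lubell contribution by
\[
L_{R_j}(S) \,:=\, \sum_{i\in[2^{j-1},\,2^j)}\frac{|S\cap A_i|}{|A_i|}\le \frac{|S\cap R_j|}{\min_{i\in[2^{j-1},\,2^j)}|A_i|}.
\]
The numerator is bounded by the previous display. For the denominator, I would split into two subcases: if $i<k$, then $|A_i|=\binom{i+n-1}{n-1}\ge c_n\,i^{n-1}\ge c_n\,(2^{j-1})^{n-1}$, while if $i\ge k$ (and $i\le M/2$), then the unimodality and symmetry of $(|A_i|)_{i=0}^{M}$ give $|A_i|\ge |A_k|=\binom{k+n-1}{n-1}-n=\Omega_n(k^{n-1})$. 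In either subcase, for $j$ satisfying $2^{j-1}\ge n$ and $2^j\le M/2$, these lower bounds match the numerator so that $L_{R_j}(S)=O_n(C_P)$.

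Small rings ($2^{j-1}<n$) are handled trivially: their rank ranges contain fewer than $n$ levels, each contributing at most $1$ to the Lubell mass, for a total of $O(n)$. The symmetric ``high'' rings (with $r(x)$ close to $M$) are treated identically using the order-reversing automorphism $x\mapsto (k+1)\mathbf{1}-x$, which preserves level sizes. Summing the $O_n(C_P)$ contributions over the $O(\log(nk))=O_n(\log k)$ dyadic scales, and absorbing the regime $k\le$ constant depending on $n$ into the trivial bound $L_{[k]^n}(S)\le n(k-1)+1$, gives $L_{[k]^n}(S)\le C(P)\alpha_n\log k$ for some $\alpha_n$ depending only on $n$.

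The main obstacle is the transition between the ``unconstrained'' regime $i<k$, where $|A_i|$ is a clean binomial coefficient, and the ``constrained'' regime $i\ge k$, where inclusion--exclusion makes the explicit formula unwieldy. One must choose the sub-grid invoked in Theorem \ref{mainthm} at each dyadic scale so that its width matches the minimum level size in the corresponding rank range up to an $n$-dependent factor; the unimodality of $(|A_i|)$ is exactly what bridges the two regimes and lets each dyadic ring contribute only $O_n(C_P)$.
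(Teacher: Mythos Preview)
Your proposal is correct and follows essentially the same approach as the paper: split $[k]^n$ into a lower and upper half by symmetry, dyadically decompose the lower half by rank, embed each dyadic ring into a sub-grid $[2^{j}]^n$ (or $[k]^n$), apply Theorem~\ref{mainthm} together with Lemma~\ref{estimate} to bound $|S|$ on that ring, and divide by a lower bound on the smallest level size in the ring to control its Lubell contribution; summing over $O_n(\log k)$ scales yields the result. The only cosmetic difference is how the level-size lower bound is obtained---the paper counts directly that $|A_{2^i-n}|\ge (2^i/2n^2)^{n-1}$, whereas you use the exact binomial formula for ranks below $k$ and unimodality above---but the structure of the argument is identical.
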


\begin{proof}
	Again, let $A_{0},...,A_{kn-n}$ be the levels of $[k]^{n}$. Let $$Q^{-}=\{(a_{1},...,a_{n})\in [k]^{n}:a_{1}+...+a_{n}\leq (k+1)n/2\},$$ that is $Q^{-}$ is the lower half of $[k]^{n}$, and let $Q^{+}=[k]^{n}\setminus Q^{-}$ be te upper half of $[k]^{n}$. We prove that the Lubell mass of $S^{-}=S\cap Q^{-}$ is bounded by $C_{P}\beta_{n}\log k$, where $C_{P}$ is the constant defined in Theorem \ref{mainthm} and $\beta_{n}$ is some function of $n$. Then, a symmetric argument shows that the Lubell mass of $S^{+}=S\cap Q^{+}$ is also bounded by $C_{P}\beta_{n}\log k$, finishing our proof.

	  Let $s=\lfloor\log_{2}(k+1)n/2\rfloor$, and for $i=0,...,s-1$, let $$C_{i}=\{(a_{1},...,a_{n})\in Q^{-}:2^{i}\leq a_{1}+...+a_{n}<2^{i+1}\}.$$ Also, let $S_{i}=C_{i}\cap S^{-}$. Clearly, $C_{i}\subset [2^{i+1}]^{n}$, so by Theorem \ref{mainthm} and Lemma \ref{estimate}, we have $$|S_{i}|\leq O(C_{P}2^{(i+1)(n-1)}/\sqrt{n}).$$
	  Also, $C_{i}$ is the union of the levels $A_{2^{i}-n},...,A_{r-n}$, where $r=\min\{2^{i+1}-1,(k+1)n/2\}$, which satisfy 
	  $$|A_{2^{i}-n}|\leq...\leq|A_{r-n}|.$$
	  The size of $A_{2^{i}-n}$ is at least $(2^{i}/2n^{2})^{n-1}$, as for any choice $a_{1},...,a_{n-1}\in [2^{i}/n,2^{i}/n+2^{i}/n^{2}]$, there exists a unique $a_{n}$ such that $(a_{1},...,a_{n})\in A_{2^{i}-n}$. 
	  Hence, we have
	  $$L_{[k]^{n}}(S_{i})=\sum_{j=2^{i}-n}^{r-n}\frac{|S_{i}\cap A_{j}|}{|A_{j}|}\leq \frac{(2n^{2})^{n-1}|S_{i}|}{2^{i(n-1)}}\leq O(C_{P}(2n^{2})^{n-1}),$$
	  which gives
	  $$L_{[k]^{n}}(S^{-})=\sum_{i=0}^{s-1}L_{[k]^{n}}(S_{i})=O(C_{P}(2n^{2})^{n-1}s)=O(C_{P}(2n^{2})^{n-1}\log k).$$
\end{proof}

In the light of these results, we believe that the following generalization of Theorem \ref{Lubell} might be true.

\begin{conjecture}
	Let $P$ be a poset. There exists a constant $C(P)$ such that the following holds. For every positive integer $k$ and $n$ satisfying $n\geq \dim P$, if $S\subset [k]^{n}$ does not contain a copy of $P$, then $$L_{[k]^{n}}(S)\leq C(P)\log k.$$
\end{conjecture}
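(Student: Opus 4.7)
The plan is to combine the dyadic decomposition by coordinate sum, as in the second claim above, with a sharper per-band bound that eliminates the $(2n^2)^{n-1}$ factor appearing there. First, by the symmetry $x \mapsto (k+1,\ldots,k+1)-x$, I would reduce to bounding $L_{[k]^n}(S \cap Q^-)$ in the lower half $Q^- = \{x : \sum_i x_i \leq (k+1)n/2\}$. Then I would partition $Q^-$ into the dyadic bands
$$B_t = \{x \in Q^- : 2^t \leq x_1 + \cdots + x_n < 2^{t+1}\},$$
where $t$ ranges over an interval of length $O(\log k)$ (since the relevant sums lie in $[n,(k+1)n/2]$, a range of multiplicative length $O(k)$).

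The task then reduces to proving, for every band,
$$L_{[k]^n}(S \cap B_t) \leq C'(P),$$
with $C'(P)$ depending only on $P$; summing over the $O(\log k)$ bands would then yield the conjectured bound. A useful first reduction is that all elements of $B_t$ live in the subgrid $[m]^n$ with $m = \min(k, 2^{t+1})$, and for the levels $j$ visited by $B_t$ the level sizes $|A_j^{[k]^n}|$ and $|A_j^{[m]^n}|$ coincide, so we may work entirely inside $[m]^n$. To bound this per-band Lubell mass, I would try to adapt the proof of Theorem \ref{mainthm}: write $n = n_1 + \cdots + n_d$ with $d = \dim P$, decompose each $[m]^{n_j}$ into chains of length $\Theta(m\sqrt{n/d})$ via Corollary \ref{chain}, and consider the product chains $G = C_{1,j_1} \times \cdots \times C_{d,j_d}$. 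For each such $G$, Corollary \ref{cor:pattern} already bounds $|S \cap G|$, and the goal would be to convert this cardinality bound into a bound on $\sum_{x \in S \cap G} 1/|A_{\mathrm{level}(x)}^{[m]^n}|$ and then sum over all product chains.

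The hard part will be controlling the reciprocal level weights $1/|A_j|$ within a single product chain $G$: these can vary by a factor exponential in $n$ across the levels visited by $G$, and it is precisely this variation that forces the $(2n^2)^{n-1}$ factor in the second claim's analysis. Avoiding this blow-up appears to require a genuinely new ingredient, the most natural candidate being a weighted version of the Klazar--Marcus theorem (Theorem \ref{pattern}) that bounds $\sum_{x : M(x)=1} w(x)$ for every permutation-pattern-avoiding matrix $M$ and a suitable class of weights $w$, with the constant depending only on the forbidden pattern. Such a weighted pattern theorem would plug directly into the framework above and deliver the constant $C'(P)$ per band. An alternative (and also difficult) route would be to establish Conjecture \ref{genconj} and exploit a genuinely balanced chain decomposition in which each chain visits the levels more symmetrically, making the Lubell weights along a single chain manageable.
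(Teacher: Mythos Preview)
The statement you are attempting to prove is not a theorem in the paper but its closing \emph{conjecture}; the paper offers no proof and explicitly leaves it open. There is therefore no paper proof to compare your proposal against.

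Your proposal is honest about this: it is not a proof but a plan with an acknowledged gap. The dyadic-band reduction and the passage to the subgrid $[m]^n$ are sound and indeed mirror the argument the paper uses for the weaker Claim with the $\alpha_n$ factor. You correctly identify where that argument loses the exponential-in-$n$ factor --- the variation of $1/|A_j|$ across the levels visited by a single product chain --- and you correctly observe that removing it would require something new, either a weighted Klazar--Marcus theorem or a much more balanced chain decomposition (essentially Conjecture~\ref{genconj}). Neither of these ingredients is currently available, which is precisely why the paper states the result as a conjecture rather than a theorem. So your proposal is a reasonable research outline, but as a proof it has the same status as the paper's: open.
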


\end{document}